\documentclass[11pt]{article}

\usepackage[T1]{fontenc}
\usepackage{lmodern}
\usepackage{microtype}
\usepackage[margin=1in]{geometry}
\usepackage{amsmath,amssymb,amsthm}
\usepackage[hidelinks]{hyperref}

\hypersetup{
  pdftitle={Neumaier graphs of coherent rank five},
  pdfauthor={Gary Greaves and Zhao Kuang Tan}
}

\newcommand{\F}{\mathbb F}
\newcommand{\one}{\mathbf 1}
\newcommand{\cA}{\mathcal A}
\newcommand{\cB}{\mathcal B}
\newcommand{\cW}{\mathcal W}
\newcommand{\Adj}{A_{\Gamma}}
\newcommand{\bL}{\mathbf L}

\newtheorem{theorem}{Theorem}[section]
\newtheorem{proposition}[theorem]{Proposition}

\theoremstyle{definition}

\newtheorem{example}[theorem]{Example}
\theoremstyle{remark}

\newtheorem{question}[theorem]{Question}
\numberwithin{equation}{section}

\makeatletter
\newcommand*{\transpose}{%
  {\mathpalette\@transpose{}}%
}
\newcommand*{\@transpose}[2]{%
  \raisebox{\depth}{$\m@th#1\intercal$}%
}
\makeatother

\title{Neumaier graphs of coherent rank five}
\author{
  Gary Greaves\thanks{School of Physical and Mathematical Sciences,
  Nanyang Technological University, 21 Nanyang Link, Singapore 637371;
  \texttt{gary@ntu.edu.sg}}
  \and
  Zhao Kuang Tan\thanks{School of Physical and Mathematical Sciences,
  Nanyang Technological University, 21 Nanyang Link, Singapore 637371;
  \texttt{zhaokuan001@e.ntu.edu.sg}}
}
\date{}

\begin{document}
\maketitle

\begin{abstract}
We construct an infinite family of Neumaier graphs of coherent rank five, answering the existence question at the smallest possible coherent rank beyond the strongly regular case. For every prime power $q\geqslant7$ with $q\equiv3\pmod4$, set $n=q+1$. 
Each graph in our construction has precisely five distinct eigenvalues, Neumaier parameters
\[
 \left(
 n(n-1)(n-3),
 \frac{n^2(n-3)}2,
 \frac{n(n^2-n-8)}4;
 \frac{(n-2)^2}{2},
 (n-1)(n-3)
 \right),
\]
and its adjacency matrix lies in the Bose--Mesner algebra of the four-class association scheme of Holzmann, Kharaghani, and Suda. 
Paley Hadamard matrices and Desarguesian mutually orthogonal Latin squares yield an infinite family whose smallest member has parameters 
$(280,160,96;18,35)$.
\end{abstract}



\section{Introduction}

A finite simple graph is \textbf{$(v,k,\lambda)$-edge-regular} if it has $v$ vertices, is $k$-regular, and every adjacent pair has exactly $\lambda$ common neighbours. 
A proper non-empty clique $C$ of order $s$ is \textbf{$e$-regular} if every vertex outside $C$ is adjacent to precisely $e>0$ vertices in $C$.
A non-complete edge-regular graph containing an $e$-regular clique is a \textbf{Neumaier graph}; it is \textbf{strictly Neumaier} if it is not strongly regular \cite{neumaier1981}. 
We write $(v,k,\lambda;e,s)$ for its parameters.

Neumaier asked whether every Neumaier graph is strongly regular \cite[p.~248]{neumaier1981}. Greaves and Koolen answered this question negatively by constructing an infinite family of strictly Neumaier graphs \cite{greavesKoolen2018}, followed by a second construction in \cite{greavesKoolen2018b}. Further constructions have arisen from affine polar graphs \cite{evansGoryainovPanasenko2019}, switching and a general nexus-one construction \cite{evansEtAl2023}, Jacobi sums \cite{abiadEtAl2023}, and cyclotomy \cite{greavesTan2026}.
Small-order existence and non-existence were studied further in \cite{abiadDeBoeckZeijlemaker2024}. Recent developments include Neumaier Cayley graphs \cite{evansEtAl2026} and examples with exactly five adjacency eigenvalues \cite{deBruynEtAl2026}.
The coherent rank of a graph is the dimension of its coherent closure \cite{higman1987}. 
Non-trivial strongly regular graphs have coherent rank three, whereas every Neumaier graph of coherent rank at most four is strongly regular \cite{abiadEtAlFewEigenvalues,greavesTan2026}. 
Thus, coherent rank five is the first possible rank beyond the strongly regular case, and the existence of Neumaier graphs of coherent rank five was asked in \cite[Question~2.4]{greavesTan2026}.

Our main result is the following.

\begin{theorem}\label{thm:main}
For every prime power $q\geqslant7$ with $q\equiv3\pmod4$, there exists a
Neumaier graph of coherent rank five with parameters
\[
 \left(
 (q+1)q(q-2),
 \frac{(q+1)^2(q-2)}2,
 \frac{(q+1)(q^2+q-8)}4;
 \frac{(q-1)^2}{2},
 q(q-2)
 \right).
\]
\end{theorem}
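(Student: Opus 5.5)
We work inside the four-class association scheme of Holzmann, Kharaghani and Suda, built from a Hadamard matrix of order $n=q+1$ together with a set of $n-2$ mutually orthogonal Latin squares of order $n$. When $q\equiv3\pmod4$ is a prime power, the Paley construction gives a skew-type Hadamard matrix of order $n=q+1$. The required MOLS do not come from a Desarguesian plane of order $n$ itself, since $n$ is not a prime power in general. Instead we intend to use the fact that the scheme's vertex set has size $n(n-1)(n-3)=(q+1)q(q-2)$. We therefore aim for a fibration in which one coordinate is indexed by $\F_q$ and carries the affine MOLS of order $q$, and another coordinate is indexed by the rows of the Hadamard matrix. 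The first step is to write down the vertex set and the four relations $R_1,\dots,R_4$ explicitly. From the known intersection numbers (or by direct computation of the relation matrices $A_1,\dots,A_4$) we then obtain the eigenmatrix $P$ of the scheme.

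Next we choose $\Adj$ as a $0/1$ sum of some of the $A_i$, most likely $A_i+A_j$ for a pair with $k_i+k_j=n^2(n-3)/2$. Lying in the Bose--Mesner algebra gives regularity at once. Edge-regularity is the condition that $\Adj^2$ has the constant value $\lambda=n(n^2-n-8)/4$ on every relation in the support of $\Adj$, and the intersection numbers $p^k_{ij}$ reduce this to a few polynomial identities in $n$. The eigenvalues of $\Adj$ are the corresponding row sums of columns of $P$. We will check that they are five distinct numbers, which also rules out strong regularity.

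For the regular clique we exhibit an explicit set $C$ of size $s=(n-1)(n-3)=q(q-2)$. A natural candidate is one fibre of the scheme, or a union of cells of a Latin square that are pairwise in the chosen relations. The cleanest way to verify $e$-regularity is the spectral criterion: a clique $C$ of size $s$ is $e$-regular exactly when $\Adj\one_C-e\one+(e-s+1)\one_C$ is zero. Equivalently, $\one_C-\tfrac{s}{v}\one$ lies in the eigenspace for $\theta=-(s-1)/\ldots$, that is, the eigenvalue $-k/(s-1)\cdot$-type bound is met with equality. In practice we will count directly: for each relation $R_i$ we count the neighbours in $C$ of an outside vertex, and check that the count is $(n-2)^2/2$ for every outside vertex.

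Coherent rank five follows once we show that the coherent closure of $\Adj$ is the whole Bose--Mesner algebra. Since $\Adj$ has five distinct eigenvalues, its polynomial algebra already has dimension $5$, and it sits inside the $5$-dimensional Bose--Mesner algebra. The two algebras therefore coincide. This algebra is coherent, so it is the closure, and the rank is exactly $5$.

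The main obstacle is identifying the right relations and the clique so that $\lambda$ is genuinely constant and the clique is regular, and doing this uniformly in $q$. The parity conditions $q\equiv3\pmod4$ (making $(n-2)^2/2$ and $n(n^2-n-8)/4$ integers) and $q\ge7$ (excluding the degenerate case $q=3$, where the parameters collapse) signal where the skew Paley structure enters. If direct counting becomes messy, we fall back on the eigenmatrix $P$: edge-regularity and clique-regularity then become linear conditions on $P$ and the intersection numbers, and these can be checked symbolically in $n$.
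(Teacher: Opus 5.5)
Your skeleton is sound, and the coherent-rank step is exactly the paper's argument: five distinct eigenvalues mean $\mathbb R[\Adj]$ fills the $5$-dimensional Bose--Mesner algebra, which is therefore the closure. The gap is that the construction itself is never pinned down, and the one concrete ingredient you state is wrong.

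The Holzmann--Kharaghani--Suda scheme is built from $m$ mutually \emph{appropriate} Latin squares of order $n-1=q$, whose symbols index $h_2,\dots,h_n$. Its vertex set is $\Omega=\{(i,r,x)\}$, of size $mn(n-1)$. Since $q$ is a prime power, the Desarguesian squares $L_\alpha(r,c)=\alpha(r-c)$ apply directly and no ``fibration'' is needed. Plain affine MOLS such as $\alpha r+c$ are not appropriate; you need them after the row--symbol interchange.

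The decisive idea you are missing is that $m$ is forced. For $\Adj=A_2+A_4$ the intersection numbers give $\lambda_{A_2}-\lambda_{A_4}=\frac n4(m-n+3)$, so the graph is edge-regular if and only if $m=n-3=q-2$. You must therefore use exactly $q-2$ of the $q-1$ Desarguesian squares. With the $n-2$ squares of your first sentence the graph is not edge-regular, and your vertex count $n(n-1)(n-3)$ silently assumes $m=n-3$. Your valency criterion also does not select the graph. $A_3+A_4$ has the same valency, but its two $\lambda$-values differ by $\frac n4(7-n-m)$, so it is never edge-regular for $n\geqslant8$.

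The clique is the other missing idea, and your candidate fails. The classes of the scheme's parabolics are the sets $\Omega_{i,r}$, which are cocliques of size $n$ in $A_2+A_4$, and the sets $\Omega_i$, which induce complete multipartite graphs on $n(n-1)$ vertices. Neither is a clique of size $(n-1)(n-3)$. The paper instead uses the transversal $\mathcal C_x=\{(i,r,x):1\leqslant i\leqslant m,\ 1\leqslant r\leqslant n-1\}$ for a fixed column $x$ of $H$.
\begin{itemize}
\item It is a clique because $\frac12(J_n+\mathfrak H_\ell)$ has all-ones diagonal. This is exactly what fails for $A_3$.
\item An outside vertex $(i,r,y)$ has $n-2$ neighbours in $\mathcal C_x\cap\Omega_i$, via $A_4$.
\item For each $j\neq i$ it has exactly $\frac n2-1$ neighbours in $\mathcal C_x\cap\Omega_j$. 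This uses that row $r$ of $L_{ij}$ runs through all symbols, and that $\sum_{\ell\geqslant2}h_\ell(x)h_\ell(y)=-1$ (orthogonal columns, with $h_1=\one^\transpose$).
\end{itemize}
This gives nexus $(n-2)+(m-1)\frac{n-2}2=\frac{(n-2)^2}2$ at $m=n-3$.

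Two smaller corrections. First, $\one_C-\frac sv\one$ is an eigenvector for $s-1-e=\frac{n(n-4)}2$, a positive eigenvalue, not for a negative eigenvalue meeting a clique bound. Second, $q\equiv3\pmod4$ is needed so that a (Paley) Hadamard matrix of order $q+1$ exists, not for integrality, which only needs $n$ even; skewness plays no role.
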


As $q$ ranges over $3^{2a+1}$ for $a\geqslant1$, the graphs of Theorem~\ref{thm:main} form an infinite family of pairwise non-isomorphic graphs.
Theorem~\ref{thm:main} provides the first known infinite family of Neumaier graphs with coherent rank five, and also, to our knowledge, the first infinite family of Neumaier graphs having five distinct eigenvalues.
The adjacency matrix of each graph of Theorem~\ref{thm:main} belongs to the Bose--Mesner algebra of a four-class association scheme constructed by Holzmann, Kharaghani, and Suda~\cite{holzmannKharaghaniSuda2015}.

Our construction requires a Hadamard matrix $H$ of order $n$ and a
family of $n-3$ mutually appropriate Latin squares of order $n-1$
(equivalently, the existence of $n-3$ mutually orthogonal Latin
squares of order $n-1$).
We provide a more general construction (see Theorem~\ref{thm:main2}) based on these ingredients in Section~4.
The well-known Paley Hadamard matrices and Desarguesian mutually orthogonal Latin squares then give rise to Theorem~\ref{thm:main}, above. 

The paper is organised as follows. 
In Section~2, we fix our conventions for coherent algebras and association schemes.
In Section~3, we introduce MALS, whose existence is equivalent to that of MOLS.
We recall the
four-class association scheme of Holzmann, Kharaghani, and Suda, and define the graph
$\Gamma(H;\bL)$, which depends on a Hadamard matrix $H$ and a family of MALS $\bL$. 
In Section~4, we show that $\Gamma(H;\bL)$ has a spread of regular cliques and provide a condition that guarantees that $\Gamma(H;\bL)$ is edge regular.
Finally, in Section~5 we outline some further questions.

\section{Coherent algebras and association schemes}

\subsection{Coherent algebras and coherent closure}

We use the coherent-algebra and association-scheme conventions \cite[Section~2]{greavesTan2026}, with coherent algebras taken over $\mathbb R$. 
For completeness, we recall the definitions and notation needed below.

Let $X$ be a finite set, and write $M\circ N$ for the entrywise product of matrices $M$ and $N$.
A \textbf{coherent algebra} on $X$ is a real vector subspace $\mathcal C\subseteq\mathbb R^{X\times X}$ such that
\begin{enumerate}
\setlength{\itemsep}{0pt}
\setlength{\parskip}{0pt}
\item $I,J\in\mathcal C$;
\item $M^\transpose\in\mathcal C$ for every $M\in\mathcal C$;
\item $MN,M\circ N\in\mathcal C$ for every $M,N\in\mathcal C$.
\end{enumerate}

Every coherent algebra has a unique standard basis $\{A_1,\dots,A_r\}$ of pairwise disjoint $\{0,1\}$-matrices whose sum is $J$; the corresponding binary relations form its underlying \textbf{coherent configuration} \cite[Section~3]{higman1987}. 
The matrices $A_1,\dots,A_r$ are called the \textbf{relation matrices} of the coherent configuration.
For a graph $\Gamma$ on $X$ with adjacency matrix $A_\Gamma$, its \textbf{adjacency algebra} is $\cA(\Gamma):=\mathbb R[A_\Gamma]=\{p(A_\Gamma):p\in\mathbb R[x]\}$. 
Since $A_\Gamma$ is a real symmetric matrix, $\dim\cA(\Gamma)$ is the number of distinct adjacency eigenvalues. 
The \textbf{coherent closure} $\cW(\Gamma)$ is the smallest coherent algebra containing $A_\Gamma$.
The \textbf{coherent rank} of $\Gamma$ is defined to be the rank of its coherent closure $\cW(\Gamma)$. 
We use the standard equivalent characterisation that $\Gamma$ is \textbf{quotient-polynomial} precisely when $\cA(\Gamma)=\cW(\Gamma)$ \cite{fiol2016,greavesTan2026}.

\subsection{Association schemes}

A coherent configuration is \textbf{homogeneous} when $I$ is one of its standard basis matrices. 
We call a homogeneous coherent configuration an \textbf{association scheme}; all schemes below are symmetric, meaning that every relation matrix is symmetric. 
For a symmetric $d$-class association scheme, write the relation matrices as $A_0=I,A_1,\dots,A_d$, and let $R_i$ denote the relation with adjacency matrix $A_i$. 
Since the standard basis spans an algebra, for each $i,j$ there are unique non-negative integers $p_{ij}^{\,0},\dots,p_{ij}^{\,d}$ such that
\[
 A_iA_j=\sum_{t=0}^d p_{ij}^{\,t}A_t.
\]
These coefficients are the \textbf{intersection numbers}: if $(x,y)\in R_t$, then $p_{ij}^{\,t}$ counts the vertices $z$ such that $(x,z)\in R_i$ and $(z,y)\in R_j$.

The \textbf{Bose--Mesner algebra} is $\cB=\operatorname{span}_{\mathbb R}\{A_0,\dots,A_d\}$, a coherent algebra of dimension $d+1$. 
It also has a basis of primitive idempotents $E_0,\dots,E_d$. 
We use the eigenmatrix convention
\[
 A_i=\sum_{j=0}^d P_{ji}E_j,
 \qquad
 E_j=\frac1{|X|}\sum_{i=0}^d Q_{ij}A_i,
\]
so $P$ and $Q$ are the first and second eigenmatrices, respectively \cite[p.~150]{holzmannKharaghaniSuda2015}. 
If $A_\Gamma$ is a sum of basis matrices, then $\cA(\Gamma)\subseteq\cW(\Gamma)\subseteq\cB$; when $\cA(\Gamma)=\cB$, we say that $A_\Gamma$ \textbf{generates} the Bose--Mesner algebra.

\section{The graph construction}

\subsection{Mutually appropriate Latin squares}

A \textbf{Latin square} of order $t$ is a $t\times t$ array in which every
symbol occurs exactly once in each row and column. Two Latin squares with the
same column and symbol sets are \textbf{appropriate} if each row of one agrees
with each row of the other in exactly one column. A pairwise appropriate
family is called a family of \textbf{mutually appropriate Latin squares} (MALS).
MALS are known as \textit{mutually suitable Latin squares} (MSLS) in
\cite{holzmannKharaghaniOrrick2010} and as \textit{unique fixed symbol} (UFS) Latin squares in
\cite{kharaghaniSuda2018}.

Two Latin squares with the same cell and symbol sets are \textbf{orthogonal} if,
when superimposed, every ordered pair of symbols occurs exactly once. A
pairwise orthogonal family is called a family of \textbf{mutually orthogonal Latin
squares} (MOLS).

MALS are equivalent to MOLS~\cite[Lemma 9]{holzmannKharaghaniOrrick2010}. 
For the reader's convenience, we repeat the argument.
A Latin square $L$ can be regarded as a set of triples $((i,j),k)$, where $L(i,j)=k$.
Denote by $L^\dagger$ the image of $L$ under the row--symbol interchange
\begin{equation}\label{eq:row-symbol-interchange}
 T\colon ((i,j),k)\mapsto((k,j),i).
\end{equation}
In other words, $L^\dagger(k,j)=i$ if and only if $L(i,j)=k$.
Then $L^\dagger$ is again a Latin square. 
Indeed, each column of $L^\dagger$ is the inverse of the corresponding column permutation of $L$, while the fact that row $i$ of $L$ contains $k$ in a unique column means that row $k$ of $L^\dagger$ contains $i$ exactly once. 
For fixed symbols \(a,b\), an agreement
$L^\dagger(a,c)=M^\dagger(b,c)=r$
is equivalent to the cell \((r,c)\) containing the ordered pair \((a,b)\) in the superposition of \(L\) and \(M\). 
Thus, uniqueness of the agreement column is precisely orthogonality.
The interchange in \eqref{eq:row-symbol-interchange} is an involution.

Using the standard Desarguesian construction of MOLS~\cite[Construction~3.29]{HCD}, we apply \eqref{eq:row-symbol-interchange} to obtain a corresponding construction of MALS.

\begin{proposition}
\label{prop:desarguesian-mals}
Let $q$ be a prime power. 
For each $\alpha\in\F_q \backslash \{0\}$, define the Latin square $L_\alpha(r,c)=\alpha(r-c)$, where $r,c \in \mathbb F_q$.
Then $\{L_\alpha \;:\; \alpha\in\F_q \backslash \{0\} \}$ is a family of $q-1$ MALS of order $q$.
\end{proposition}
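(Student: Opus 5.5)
The plan is to verify the two properties directly from the formula $L_\alpha(r,c)=\alpha(r-c)$, using only field arithmetic in $\F_q$.

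First I check that each $L_\alpha$ is a Latin square on row, column and symbol set $\F_q$. For a fixed row $r$, the map $c\mapsto \alpha(r-c)$ is a bijection of $\F_q$, being an affine map with nonzero slope $-\alpha$. For a fixed column $c$, the map $r\mapsto\alpha(r-c)$ is likewise a bijection, with slope $\alpha\neq0$. So every symbol occurs exactly once in each row and in each column.

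Next, appropriateness: all the $L_\alpha$ share the column and symbol sets $\F_q$. Take $\alpha\neq\beta$ in $\F_q\setminus\{0\}$, a row $r$ of $L_\alpha$, and a row $s$ of $L_\beta$. I count the columns $c$ with
\[
\alpha(r-c)=\beta(s-c).
\]
This rearranges to $(\beta-\alpha)c=\beta s-\alpha r$. Since $\beta-\alpha\neq0$, it has the unique solution $c=(\beta s-\alpha r)/(\beta-\alpha)$. Hence row $r$ of $L_\alpha$ and row $s$ of $L_\beta$ agree in exactly one column. This covers every pair of rows, including $r=s$.

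Finally, distinct $\alpha$ give distinct squares: already row $0$, column $1$ carries the symbol $-\alpha$. So the family has exactly $q-1$ members.

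As a consistency check, one could instead apply the interchange \eqref{eq:row-symbol-interchange} to the Desarguesian MOLS and invoke the equivalence recalled above, but the direct count is simpler. There is no real obstacle here. The only point needing care is that appropriateness must hold for every pair of rows (not just equal ones), and this is exactly where $\alpha\neq\beta$ is used.
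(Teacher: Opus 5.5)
Your proof is correct, but it takes a different route from the paper. The paper gives no direct verification. It takes the classical Desarguesian MOLS $M_\alpha(x,y)=\alpha x+y$ over $\F_q$ from the cited construction and applies the row--symbol interchange $T$. It then relies on the MALS--MOLS equivalence recalled just before the proposition. Implicitly, one computes that $M_\alpha^\dagger(k,j)=i$ if and only if $\alpha i+j=k$, so $M_\alpha^\dagger(r,c)=\alpha^{-1}(r-c)$. Since $\alpha\mapsto\alpha^{-1}$ permutes $\F_q\setminus\{0\}$, these are exactly the squares $L_\alpha$.

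You instead check the Latin property and appropriateness directly from the formula. You reduce agreement of row $r$ of $L_\alpha$ and row $s$ of $L_\beta$ to the linear equation $(\beta-\alpha)c=\beta s-\alpha r$. This is the same linear algebra as the usual orthogonality proof for $\alpha x+y$, transported through $T$.

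Your argument buys self-containment: it needs neither the citation nor the unstated identification of $T(M_\alpha)$ with $L_{\alpha^{-1}}$. The paper's route buys context. It exhibits the family as the image of a classical set of $q-1$ MOLS, which makes clear that the family is as large as possible, since at most $q-1$ MOLS of order $q$ exist.

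Your separate distinctness check is harmless but redundant. If $L_\alpha=L_\beta$ with $\alpha\neq\beta$, then row $r$ of one would agree with row $r$ of the other in all $q\geqslant2$ columns, contradicting appropriateness, which you have already proved.
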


Throughout, a family of MALS of order $n-1$ has row and column labels
$\{1,\dots,n-1\}$ and symbol set $\{2,\dots,n\}$.

\subsection{The four-class association scheme of Holzmann, Kharaghani, and Suda}

We follow \cite[Section~5.3]{kharaghaniSuda2018} to define a four-class association scheme originally defined in \cite[Theorem~3.1]{holzmannKharaghaniSuda2015}.

A \textbf{Hadamard matrix} of order $n$ is a matrix
$H\in\{\pm1\}^{n\times n}$ satisfying $HH^\transpose=nI_n$. After negating
columns if necessary, assume that its first row is $\one^\transpose$. Write
the rows of $H$ as
\[
 h_1=\one^\transpose,h_2,\dots,h_n.
\]
Let $\bL=\{L_1,\dots,L_m\}$ be a family of MALS of order $n-1$, where $m\geqslant3$. 
Thus the row and column labels are $\{1,\dots,n-1\}$ and the symbol set is $\{2,\dots,n\}$.

For distinct $i,j$ and row labels $r,s$, let $L_{ij}(r,s)$ be the common
symbol in the unique column $c$ satisfying
\[
 L_i(r,c)=L_j(s,c).
\]
As shown in \cite[Section~2.3]{kharaghaniSuda2018}, $L_{ij}$ is itself a Latin square.

For $\ell\in\{2,\dots,n\}$, define $ \mathfrak H_\ell:=h_\ell^\transpose h_\ell$.
Following the proof of \cite[Theorem~2.1]{holzmannKharaghaniSuda2015}, replace each symbol
$\ell$ in $L_i$ by $\mathfrak H_\ell$, obtaining a block matrix $M_i$ of order $n(n-1)$. 
By \cite[Lemma~2.1]{holzmannKharaghaniSuda2015}, for $2\leqslant a,b\leqslant n$, the matrices $\mathfrak H_\ell$ satisfy $\mathfrak H_a\mathfrak H_b=n\delta_{ab}\mathfrak H_a$.
Therefore, for $i\neq j$,
the $(r,s)$-block of $M_iM_j^\transpose$ is
\[
 \begin{aligned}
 \left [M_iM_j^\transpose\right]_{r,s}
 &=\sum_{c=1}^{n-1}
   \mathfrak H_{L_i(r,c)}\mathfrak H_{L_j(s,c)}\\
 &=n\mathfrak H_{L_{ij}(r,s)}.
 \end{aligned}
\]
Indeed, all terms vanish except the one corresponding to the unique column in which row $r$ of $L_i$ and row $s$ of $L_j$ agree. 



Set 
\[
 \Omega=\{(i,r,x):
  1\leqslant i\leqslant m,
  1\leqslant r\leqslant n-1,
  1\leqslant x\leqslant n\}
\]
and $\Omega_{i,r}=\{(i,r,x):1\leqslant x\leqslant n\}$.
Then the five matrices in the proof of
\cite[Theorem~3.1]{holzmannKharaghaniSuda2015} are $\{0,1\}$-matrices on the set $\Omega$ given by
\[
\begin{aligned}
 A_0&=I_m\otimes I_{n-1}\otimes I_n,\\
 A_1&=I_m\otimes I_{n-1}\otimes(J_n-I_n),\\
 A_4&=I_m\otimes(J_{n-1}-I_{n-1})\otimes J_n,
\end{aligned}
\]
and, for $i\neq j$,
\[
\begin{aligned}
 A_2[\Omega_{i,r},\Omega_{j,s}]
   &=\frac12 \left (J_n + \mathfrak H_{L_{ij}(r,s)} \right ),\\
 A_3[\Omega_{i,r},\Omega_{j,s}]
   &=\frac12 \left (J_n - \mathfrak H_{L_{ij}(r,s)}\right ).
\end{aligned}
\]
For $i=j$, all blocks of $A_2$ and $A_3$ are zero.
By the proof of \cite[Theorem~3.1]{holzmannKharaghaniSuda2015}, the matrices
$A_0,\dots,A_4$ form a symmetric four-class association scheme. 
With respect to the ordering
$A_0,A_1,A_2,A_3,A_4$, the first and second eigenmatrices of this scheme are
given in \cite[Remark~3.1]{holzmannKharaghaniSuda2015} as
\[
 P=
 \begin{bmatrix}
 1&n-1&\dfrac{n(n-1)(m-1)}2&\dfrac{n(n-1)(m-1)}2&n(n-2)\\
 1&-1&\dfrac{n(m-1)}2&-\dfrac{n(m-1)}2&0\\
 1&n-1&0&0&-n\\
 1&-1&-\dfrac n2&\dfrac n2&0\\
 1&n-1&-\dfrac{n(n-1)}2&-\dfrac{n(n-1)}2&n(n-2)
 \end{bmatrix}
\]
and
\[
 Q=
 \begin{bmatrix}
 1&(n-1)^2&(n-2)m&(n-1)^2(m-1)&m-1\\
 1&-n+1&(n-2)m&-(n-1)(m-1)&m-1\\
 1&n-1&0&-n+1&-1\\
 1&-n+1&0&n-1&-1\\
 1&0&-m&0&m-1
 \end{bmatrix}.
\]

\section{The graph, its regular cliques, and edge regularity}

Given a Hadamard matrix $H$ and a family of MALS $\bL$, we can define the graph $\Gamma=\Gamma(H;\bL)$ by its adjacency matrix $\Adj=A_2+A_4$
where $A_2$ and $A_4$ are the relation matrices of the four-class association scheme defined above.

\subsection{A spread of regular cliques}

A \textbf{spread} in a graph is a set of cliques forming a partition of the vertex set.
We show that the graph $\Gamma(H;\bL)$ has a spread of regular cliques.

\begin{proposition}\label{prop:hadamard-column-clique-spread}
Let $n\geqslant8$ and $m\geqslant3$. 
Suppose $H$ is a Hadamard matrix of order $n$ with first row $h_1=\one^\transpose$, and let $\bL$ be a family of $m$ MALS of order $n-1$. 
For each $x\in\{1,\dots,n\}$, put
\[
 \mathcal C_x=\{(i,r,x):1\leqslant i\leqslant m,
 1\leqslant r\leqslant n-1\}.
\]
Then $\mathcal C_1,\dots,\mathcal C_n$ partition the vertex set of
$\Gamma(H;\bL)$ into cliques of size $m(n-1)$, each regular with nexus
$(n-2)+(m-1)(n/2-1)$.
\end{proposition}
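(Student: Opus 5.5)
The partition claim is immediate from the definition of $\Omega$: every vertex $(i,r,x)$ lies in exactly one $\mathcal C_x$, and $|\mathcal C_x| = m(n-1)$. The work is to show that each $\mathcal C_x$ is a clique and to count, for a vertex outside $\mathcal C_x$, its neighbours inside. Both steps come down to reading off entries of $A_2$ and $A_4$. First I record what $(\mathfrak H_\ell)_{x,y} = H_{\ell x}H_{\ell y}$ gives: the diagonal entries of $\mathfrak H_\ell$ are all $1$, so the $(x,x)$ entry of $\tfrac12(J_n+\mathfrak H_\ell)$ is $1$. For $y\neq x$, the entry is $1$ exactly when $H_{\ell x}=H_{\ell y}$.

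For the clique property, take distinct $(i,r,x),(j,s,x)\in\mathcal C_x$. If $i=j$ then $r\neq s$, and the $A_4$ block $J_n$ gives adjacency. If $i\neq j$, the $(x,x)$ entry of the $A_2$ block $\tfrac12(J_n+\mathfrak H_{L_{ij}(r,s)})$ equals $1$ by the diagonal observation. So $\mathcal C_x$ is a clique.

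For regularity, fix $(j,s,y)$ with $y\neq x$ and count its neighbours $(i,r,x)$. There are three kinds.
\begin{enumerate}
\item With $i=j$ and $r=s$, the pair lies in $A_1$, which is not an edge, so these contribute $0$.
\item With $i=j$ and $r\neq s$, the $A_4$ blocks are $J_n$, so these contribute $n-2$.
\item With $i\neq j$, adjacency holds iff $H_{\ell x}=H_{\ell y}$, where $\ell = L_{ij}(r,s)$.
\end{enumerate}
The main step is case 3. For fixed $i\neq j$, as $r$ ranges over $\{1,\dots,n-1\}$, the symbol $L_{ij}(r,s)$ ranges bijectively over $\{2,\dots,n\}$, since $L_{ij}$ is a Latin square by \cite[Section~2.3]{kharaghaniSuda2018} and column $s$ contains each symbol once. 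The contribution for each $i$ is therefore $\#\{\ell\in\{2,\dots,n\}: H_{\ell x}=H_{\ell y}\}$. Columns $x$ and $y$ of $H$ are orthogonal, because $H^\transpose H = nI$, so they agree in exactly $n/2$ rows, and row $1$ is one of them since $h_1=\one^\transpose$. Hence the count is $n/2-1$, which is positive because $n\geqslant 8$ (indeed $n\geqslant4$ suffices). Summing over the $m-1$ choices of $i\neq j$ and adding case 2 gives the nexus $(n-2)+(m-1)(n/2-1)$. This value is positive, and $\mathcal C_x$ is proper and non-empty, so each clique is regular in the stated sense.

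The only delicate point is in case 3: one must use the Latin property in the column index $s$, i.e. that a fixed column of $L_{ij}$ is a bijection from rows to symbols. I will check the symmetry convention carefully, so that the block indexed $(\Omega_{i,r},\Omega_{j,s})$ is indeed governed by $L_{ij}(r,s)$, possibly via $L_{ji}(s,r)=L_{ij}(r,s)$.
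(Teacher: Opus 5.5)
Your proof is correct and follows the paper's argument essentially step by step. The clique property comes from the $J_n$ blocks of $A_4$ and the unit diagonal of $\frac12(J_n+\mathfrak H_\ell)$. The nexus comes from $n-2$ neighbours inside the same $\Omega_i$ plus $n/2-1$ in each other $\Omega_j$, using the Latin property of $L_{ij}$ and the orthogonality of columns $x,y$ of $H$ together with $h_1=\one^\transpose$. The only cosmetic difference is that you place the outside vertex in the column-index slot and run down a column of $L_{ij}$ (using $L_{ji}(s,r)=L_{ij}(r,s)$), whereas the paper places it in the row-index slot and runs along a row.
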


\begin{proof}
Let $\bL=\{L_1,\dots,L_m\}$.
The sets $\mathcal C_1,\dots,\mathcal C_n$ partition $\Omega$, and each has
size $m(n-1)$.
Let $u=(i,r,x)$ and $v=(j,s,x)$ be distinct vertices of $\mathcal C_x$. 
If $i=j$, then $r\neq s$.
Hence, $u$ and $v$ are adjacent by the definition of $A_4$. 
If $i\neq j$, then
\[
 \left [\frac12 (J_n + \mathfrak H_{L_{ij}(r,s)})\right ]_{xx}=1.
\]
Hence, $u$ and $v$  are adjacent by the definition of $A_2$.
Thus, $\mathcal C_x$ is a clique.

Now fix $u=(i,r,y)\notin\mathcal C_x$, so $y\neq x$. 
Define $\Omega_i=\bigcup_{r=1}^{n-1}\Omega_{i,r}$.
Then, within $\Omega_i$, by the definition of $A_4$, the vertex $u$ is adjacent to $(i,s,x)$ for every $s\neq r$, giving $n-2$ neighbours in $\mathcal C_x\cap\Omega_i$.

Fix $j\neq i$. 
As $s$ varies, since $L_{ij}$ is a Latin square, the symbols $L_{ij}(r,s)$ run once through $\{2,\dots,n\}$. 
Hence, the number of neighbours of $u$ in
$\mathcal C_x\cap\Omega_j$ is
\[
\begin{aligned}
 \sum_{\ell=2}^{n}\left [\frac12 (J_n + \mathfrak H_{\ell})\right ]_{yx}
 &=\frac12\left(
    n-1+\sum_{\ell=2}^{n}h_\ell(y)h_\ell(x)
   \right)\\
 &=\frac12(n-1-1)
  =\frac n2-1.
\end{aligned}
\]
Indeed, columns $x$ and $y$ of $H$ are orthogonal, while
$h_1(x)h_1(y)=1$. 
Summing over all $\Omega_j$ yields nexus $n-2 + (m-1)(n-2)/2$, as required.
\end{proof}

\subsection{Edge regularity}

Finally, we determine when the graph $\Gamma(H;\bL)$ is edge regular.

\begin{proposition}
\label{prop:edge-regularity-criterion}
Let $n\geqslant8$ and $m\geqslant3$. 
Suppose $H$ is a Hadamard matrix of order $n$ with first row $h_1=\one^\transpose$, and let $\bL$ be a family of $m$ MALS of order $n-1$. 
Let $A$ be the adjacency matrix of $\Gamma(H;\bL)$.
Then
\[
 A^2
 =
 \left ((m-1)\frac{n(n-1)}2+n(n-2)\right )A_0+\mu_{A_1}A_1+\lambda_{A_2}A_2
 +\mu_{A_3}A_3+\lambda_{A_4}A_4,
\]
where
\begin{equation}\label{eq:common-neighbour-numbers}
\begin{aligned}
 \lambda_{A_2}
 &=(m-2)\frac{n^2}{4}+n(n-2),
 &\lambda_{A_4}
 &=(m-1)\frac{n(n-1)}4+n(n-3),\\
 \mu_{A_1}
 &=(m-1)\frac{n(n-2)}4+n(n-2),
 &\mu_{A_3}
 &=(m-2)\frac{n(n-2)}4+n(n-2).
\end{aligned}
\end{equation}
Furthermore,
$\Gamma(H;\bL)$ is edge regular if and only if $m=n-3$.
\end{proposition}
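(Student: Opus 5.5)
The plan is to compute $A^2$ spectrally in the Bose--Mesner algebra $\cB$ and then compare the two coefficients that count common neighbours of adjacent pairs. Since $A=A_2+A_4$, the eigenvalue of $A$ on the primitive idempotent $E_j$ is $\theta_j=P_{j2}+P_{j4}$. Reading off the displayed first eigenmatrix $P$ gives
\[
 \theta_0=\frac{n(n-1)(m-1)}2+n(n-2),\quad
 \theta_1=\frac{n(m-1)}2,\quad
 \theta_2=-n,\quad
 \theta_3=-\frac n2,\quad
 \theta_4=\frac{n(n-3)}2 .
\]
Then $A^2=\sum_j\theta_j^2E_j$. Substituting $E_j=\frac1{|X|}\sum_iQ_{ij}A_i$ with $|X|=mn(n-1)$, the coefficient of $A_i$ in $A^2$ is
\[
 \frac1{mn(n-1)}\sum_{j=0}^4\theta_j^2Q_{ij}.
\]

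The coefficient of $A_0$ is simply the valency $\theta_0$, which gives the first term of the claimed formula. The remaining four coefficients are obtained by inserting rows $1$ to $4$ of $Q$ and simplifying. Each is a polynomial identity in $m$ and $n$, and the factor $mn(n-1)$ should cancel in each case.

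As a cross-check, and as a fallback if the algebra gets messy, I would verify two of the coefficients combinatorially, in the style of Proposition~\ref{prop:hadamard-column-clique-spread}.
\begin{itemize}
\item For $\lambda_{A_4}$, take adjacent vertices $(i,r,x)$ and $(i,s,y)$ with $r\ne s$. Their common neighbours inside $\Omega_i$ number $n(n-3)$. For each $j\ne i$, the block row sums of $\frac12(J_n+\mathfrak H_\ell)$ contribute. These are evaluated using the fact that $L_{ij}$ is a Latin square and using orthogonality of the rows and columns of $H$, as in the proof of Proposition~\ref{prop:hadamard-column-clique-spread}.
\item For $\mu_{A_1}$, the same method applies to a pair $(i,r,x)$, $(i,r,y)$ with $x\ne y$.
\end{itemize}
Summing the contributions over the $m-1$ choices of $j$ should reproduce the expressions in \eqref{eq:common-neighbour-numbers}.

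The main obstacle is purely computational: evaluating $\sum_j\theta_j^2Q_{ij}$ correctly for $i=2$ and $i=3$, where the middle columns of $Q$ (weighted by $(n-2)m$ and $(n-1)^2(m-1)$) interact with $\theta_1^2$ and $\theta_3^2$. I would organise each computation by collecting terms in powers of $m$ before dividing.

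For the final claim, note first that the edges of $\Gamma(H;\bL)$ are exactly the pairs in $R_2\cup R_4$. Both relations are nonempty because $m\ge 3$, so $\Gamma$ is edge regular if and only if $\lambda_{A_2}=\lambda_{A_4}$. Multiplying both sides by $4/n$, this condition becomes
\[
 (m-2)n+4(n-2)=(m-1)(n-1)+4(n-3).
\]
It simplifies to $2n-8=3n-11-m$, that is, $m=n-3$, which proves the equivalence.
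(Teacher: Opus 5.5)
Your argument is correct, but it takes a different route from the paper. You compute $A^2$ in the basis of primitive idempotents. The paper instead works with the relation matrices: it quotes $A_2^2$, $A_2A_4$ and $A_4^2$ from the Holzmann--Kharaghani--Suda multiplication table and collects coefficients in $A_2^2+2A_2A_4+A_4^2$, which is purely linear bookkeeping.

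Your route needs only the displayed $P$ and $Q$, the same data the paper uses later for the spectrum. The price is a quadratic computation, which you left as ``should cancel''; it does work out. With $\theta_0=\frac n2\bigl((n-1)m+n-3\bigr)$, the sums $\sum_j\theta_j^2Q_{ij}$ for $i=1,2,3,4$ equal $\frac{n^2(n-1)m}{4}$ times $(n-2)(m+3)$, $nm+2n-8$, $(n-2)(m+2)$ and $(n-1)m+3n-11$, respectively. Dividing by $mn(n-1)$ gives exactly $\mu_{A_1},\lambda_{A_2},\mu_{A_3},\lambda_{A_4}$. A side benefit is that your route cross-checks the $P$ and $Q$ tables against the claimed intersection numbers. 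Your final reduction to $m=n-3$ agrees with the paper's $\lambda_{A_2}-\lambda_{A_4}=\frac n4(m-n+3)$, and you are slightly more careful in noting that $R_2$ and $R_4$ are both nonempty.

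One caveat concerns your combinatorial fallback. It is correct for $\lambda_{A_4}$ and $\mu_{A_1}$, but it would not by itself give $\lambda_{A_2}$, which is the coefficient the edge-regularity criterion actually needs. For a pair in $R_2$ between $\Omega_i$ and $\Omega_j$, the count of common neighbours in a third $\Omega_k$ requires knowing that exactly one $t$ satisfies $L_{ik}(r,t)=L_{kj}(t,s)$, with common symbol $L_{ij}(r,s)$. This follows, for example, from $M_iM_k^\transpose M_kM_j^\transpose=n^2M_iM_j^\transpose$, and it goes beyond the argument of Proposition~\ref{prop:hadamard-column-clique-spread}.
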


\begin{proof}
The multiplication table in the proof of
\cite[Theorem~3.1]{holzmannKharaghaniSuda2015} gives
\[
\begin{aligned}
 A_2^2&=\frac{n(n-1)(m-1)}2A_0
 +\frac{n(n-2)(m-1)}4A_1
 +\frac{n^2(m-2)}4A_2\\
 &\quad+\frac{n(n-2)(m-2)}4A_3
 +\frac{n(n-1)(m-1)}4A_4,\\
 A_2A_4&=\frac{n(n-2)}2(A_2+A_3),\\
 A_4^2&=n(n-2)(A_0+A_1)+n(n-3)A_4.
\end{aligned}
\]
Since $A^2=A_2^2+2A_2A_4+A_4^2$, collecting coefficients gives \eqref{eq:common-neighbour-numbers}. 
Since $A = A_2+A_4$, the graph $\Gamma(H;\bL)$ is edge regular if and only if $\lambda_{A_2}=\lambda_{A_4}$.
The conclusion follows from the fact that \(\lambda_{A_2}-\lambda_{A_4}
=\frac n4(m-n+3)\).
\end{proof}

\subsection{Five eigenvalues and coherent rank five}

Now we can prove our general construction theorem.

\begin{theorem}
\label{thm:main2}
    Let $n\geqslant8$. 
Suppose $H$ is a Hadamard matrix of order $n$ with first row $h_1=\one^\transpose$, and let $\bL$ be a family of $n-3$ MALS of order $n-1$. 
Then $\Gamma(H;\bL)$ is a Neumaier graph of coherent rank five with parameters
\[
 (v,k,\lambda;e,s)=
 \left(
 n(n-1)(n-3),
 \frac{n^2(n-3)}2,
 \frac{n(n^2-n-8)}4;
 \frac{(n-2)^2}{2},
 (n-1)(n-3)
 \right),
\]
and spectrum
\[
 \left\{
 \begin{gathered}
 \left[\frac{n^2(n-3)}2\right]^1,
 \left[\frac{n(n-3)}2\right]^{n-4},
 \left[\frac{n(n-4)}2\right]^{(n-1)^2},\\
 \left[-\frac n2\right]^{(n-1)^2(n-4)},
 [-n]^{(n-2)(n-3)}
 \end{gathered}
 \right\}.
\]
\end{theorem}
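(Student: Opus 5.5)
Set $m=n-3$; note $m\geqslant5\geqslant3$ since $n\geqslant8$, so Propositions~\ref{prop:hadamard-column-clique-spread} and~\ref{prop:edge-regularity-criterion} both apply.

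\textbf{Neumaier parameters.} The vertex count is $v=|\Omega|=mn(n-1)=n(n-1)(n-3)$. The valency is $k=P_{02}+P_{04}=\frac{n(n-1)(n-4)}2+n(n-2)=\frac{n^2(n-3)}2$; this is also the $A_0$ coefficient in Proposition~\ref{prop:edge-regularity-criterion}. By that proposition $\Gamma$ is edge regular, and $\lambda=\lambda_{A_4}=\frac{n(n-1)(n-4)}4+n(n-3)$, which simplifies to $\frac{n(n^2-n-8)}4$. The graph is not complete because $A_1\neq0$. Proposition~\ref{prop:hadamard-column-clique-spread} supplies regular cliques of size $s=(n-1)(n-3)$ with nexus $e=(n-2)+(n-4)(n-2)/2=(n-2)^2/2$, so $\Gamma$ is a Neumaier graph with the stated parameters.

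\textbf{Spectrum.} Since $\Adj=A_2+A_4$, we have $\Adj E_j=(P_{j2}+P_{j4})E_j$. The eigenvalue on $E_j$ is therefore column $2$ plus column $4$ of $P$ in row $j$, with multiplicity $\operatorname{rank}E_j=Q_{0j}$. With $m=n-3$ this gives:
\begin{itemize}
\item row $0$: $k$, multiplicity $1$;
\item row $1$: $\frac{n(n-4)}2$, multiplicity $(n-1)^2$;
\item row $2$: $-n$, multiplicity $(n-2)(n-3)$;
\item row $3$: $-\frac n2$, multiplicity $(n-1)^2(n-4)$;
\item row $4$: $-\frac{n(n-1)}2+n(n-2)=\frac{n(n-3)}2$, multiplicity $m-1=n-4$.
\end{itemize}
These match the stated spectrum. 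For $n\geqslant8$ the five values are pairwise distinct: the two negative values differ, and the three nonnegative ones satisfy $\frac{n(n-4)}2<\frac{n(n-3)}2<\frac{n^2(n-3)}2$.

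\textbf{Coherent rank five.} The inclusions $\cA(\Gamma)\subseteq\cW(\Gamma)\subseteq\cB$ hold because $\Adj$ is a sum of basis matrices of $\cB$. We have $\dim\cA(\Gamma)=5$ because there are five distinct eigenvalues, and $\dim\cB=5$. Hence all three algebras coincide, and the coherent rank is $\dim\cW(\Gamma)=5$.

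The only points requiring care are the distinctness of the eigenvalues (so that $\Adj$ generates $\cB$) and the use of the correct row and column conventions for $P$ and $Q$. The remaining steps are routine algebra. Strict Neumaier-ness is automatic, since a strongly regular graph has coherent rank at most three.
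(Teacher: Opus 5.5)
Your proof is correct and follows the paper's route. The Neumaier parameters come from Propositions~\ref{prop:hadamard-column-clique-spread} and~\ref{prop:edge-regularity-criterion} with $m=n-3$; the spectrum is the sum of the $A_2$- and $A_4$-columns of $P$, with multiplicities from the first row of $Q$; and coherent rank five follows from $\cA(\Gamma)\subseteq\cW(\Gamma)\subseteq\cB$ once all five eigenvalues are distinct. You also make explicit some checks the paper leaves implicit: the simplifications of $k$, $\lambda$, $e$, non-completeness via $A_1\neq0$, and the pairwise distinctness argument.
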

\begin{proof}
    Let $A$ be the adjacency matrix of the graph $\Gamma = \Gamma(H;\bL)$.
By Proposition~\ref{prop:hadamard-column-clique-spread} and Proposition~\ref{prop:edge-regularity-criterion}, the graph $\Gamma$ is a Neumaier graph.
It remains to consider its coherent rank and eigenvalues.

Specialise the first and second eigenmatrices $P$ and $Q$ to $m=n-3$. 
The sum of the $A_2$- and $A_4$-columns of $P$ gives the eigenvalues of $A$, while the first row of $Q$ gives their corresponding multiplicities.
Hence, when $m = n-3$, we have
\[
 \operatorname{Spec}(A)=
 \left\{
 \begin{gathered}
 \left[\frac{n^2(n-3)}2\right]^1,
 \left[\frac{n(n-3)}2\right]^{n-4},
 \left[\frac{n(n-4)}2\right]^{(n-1)^2},\\
 \left[-\frac n2\right]^{(n-1)^2(n-4)},
 [-n]^{(n-2)(n-3)}
 \end{gathered}
 \right\}.
\]
All five eigenvalues are distinct for $n\geqslant8$, so $\dim\cA(\Gamma)=5$. 
Let $\cB=\operatorname{span}_{\mathbb R}\{A_0,\dots,A_4\}$.
Since $A\in\cB$ and $\dim\cB=5$, it follows that $
 \cA(\Gamma)=\cW(\Gamma)=\cB$.
Thus, $A$ generates its coherent closure, that is, $\Gamma$ is quotient-polynomial of coherent rank five.
\end{proof}

Theorem~\ref{thm:main} now follows as a corollary of Theorem~\ref{thm:main2}.
Indeed, set $n=q+1$ and take $H$ to be a Paley Hadamard matrix of order $n$ \cite{paley1933}. 
By Proposition~\ref{prop:desarguesian-mals}, there is a family of $q-1=n-2$ MALS of order $q=n-1$. 
Choose any $q-2=n-3$ of them.
The result now follows from Theorem~\ref{thm:main2}.

\section{Concluding remarks}

We have provided the first known infinite family of Neumaier graphs having precisely five distinct eigenvalues and the first known examples of Neumaier graphs having coherent rank five.

\begin{example}
The smallest order example from Theorem~\ref{thm:main} has $q=7$, where $n=8$ and $m=5$. 
This graph is a Neumaier graph with parameters $(280,160,96;18,35)$, and spectrum 
\[
 \{[160]^1,[20]^4,[16]^{49},[-4]^{196},[-8]^{30}\}.
\]
\end{example}

The following questions immediately arise.

\begin{question}
What is the smallest Neumaier graph having coherent rank five?
\end{question}

An association scheme with relation matrices $A_0,A_1,\dots,A_d$ is called \textbf{primitive} if the graphs corresponding to each of the adjacency matrices $A_1, \dots, A_d$ are all connected; otherwise the association scheme is called \textbf{imprimitive}.
Moreover, a \textbf{parabolic} of an association scheme is an equivalence relation that is a union of basis relations; see, for example, \cite{higman1995}.
The Holzmann--Kharaghani--Suda four-class association scheme has a non-trivial parabolic and is therefore imprimitive. 

\begin{question}
Must the coherent closure of every Neumaier graph of coherent rank five be imprimitive? More strongly, must it contain a non-trivial parabolic whose classes are independent sets?
\end{question}

The parameters and spectrum of $\Gamma(H;\bL)$ depend only on $n$,
but its isomorphism type may depend on the choices of $H$ and $\bL$.

\begin{question}
For fixed $n$, how does the isomorphism type of $\Gamma(H;\bL)$
depend on the Hadamard matrix $H$ and the family of MALS $\bL$?
In particular, when do inequivalent choices produce isomorphic
graphs?
\end{question}

\end{document}